\documentclass[11pt]{article}
\usepackage{amsmath,amssymb,amsthm}
\usepackage{geometry}
\usepackage{comment}
\usepackage{tikz}
\usepackage{amsthm}
\usepackage[colorlinks=true, linkcolor=blue]{hyperref}

\newtheorem{theorem}{Theorem}
\newtheorem{proposition}{Proposition}
\newtheorem{remark}{Remark}
\newtheorem{lemma}{Lemma}

\usepackage[english]{babel}  
\usepackage{appendix}        
\usepackage{listings}
\usepackage{xcolor}

\title{A 910-block explicit construction guaranteeing a triple intersection\\
with every $6$-subset of $[60]$}
\author{Paulo Henrique Cunha Gomes\\
School of Technology (FT), University of Campinas (Unicamp)\\
Limeira, 13484-332, S\~ao Paulo, Brazil\\
\texttt{p072049@dac.unicamp.br}}
\date{}

\begin{document}
\maketitle

\begin{abstract}
We present a simple explicit family $\mathcal{B}$ of $910$ $6$-subsets of $[60]=\{1,\dots,60\}$
such that every $6$-subset $S\subset[60]$ intersects at least one block $B\in\mathcal{B}$ in at least
three elements, i.e.\ $|S\cap B|\ge 3$.
Equivalently, $\mathcal{B}$ is a covering (dominating set) of the Johnson graph $J(60,6)$ with covering
radius $3$ in the Johnson metric.
The construction is purely combinatorial, based on a fixed split of $[60]$ into two halves, a pairing
of each half, and a pigeonhole argument.
We also record a crude counting lower bound and a straightforward generalization to $[2m]$ (with $m$ even).
\end{abstract}

\medskip
\noindent\textbf{Keywords:} covering designs; constant-weight covering codes; set systems; Johnson graph; explicit construction; pigeonhole principle.\\
\textbf{2020 MSC:} 05B40; 94B05; 05C69.

\section{Introduction}

Let $[n]=\{1,2,\dots,n\}$ and write $\binom{[n]}{k}$ for the family of $k$-subsets of $[n]$.
Fix $n=60$ and $k=6$.
We study families $\mathcal{B}\subset \binom{[60]}{6}$ with the \emph{triple-intersection property}
\begin{equation}\label{eq:triple}
\forall S\in \binom{[60]}{6}\ \exists B\in\mathcal{B}\ \text{such that}\ |S\cap B|\ge 3.
\end{equation}

This property can be phrased in the language of Johnson graphs and constant-weight covering codes.
The Johnson graph $J(60,6)$ has vertex set $\binom{[60]}{6}$, where two vertices are adjacent
if they differ in exactly one element.
The Johnson distance between $S,B\in\binom{[60]}{6}$ is
\begin{equation}\label{eq:johnson-distance}
d_J(S,B)=6-|S\cap B|.
\end{equation}
Thus \eqref{eq:triple} is equivalent to requiring that every vertex $S$ lies within
distance at most $3$ from some block $B\in\mathcal{B}$, i.e.\ $\mathcal{B}$ is a covering (dominating set)
of $J(60,6)$ with covering radius $3$.

The goal of this short note is to give a clean, reproducible benchmark construction for \eqref{eq:triple}
with an explicit and elementary proof.

Covering problems of this type (in Johnson graphs, covering designs, and constant-weight covering codes)
have been studied extensively; see, for example, \cite{ColbournDinitz,CohenEtAl,Brouwer}.
In this note we do \emph{not} claim that the size $|\mathcal{B}|=910$ is optimal.What constructions are possible to reduce this number?
\[
6 \cdot \binom{9}{3} = 892; \quad 2 \cdot \binom{12}{3} = 828; \quad \binom{15}{3} + 388 = 843
\]
Rather, our aim is to provide a transparent explicit construction giving a concrete upper bound,
together with a simple (and admittedly crude) counting lower bound recorded in Section~4.

\section{Construction (910 blocks)}
\label{sec:construction}

We begin by fixing an explicit partition of the ground set $[60]$ into ten pairwise disjoint
\emph{base blocks} of size $6$:
\[
G_i=\{6(i-1)+1,\,6(i-1)+2,\,\dots,\,6i\}\qquad (1\le i\le 10).
\]
Thus $G_i\cap G_j=\emptyset$ for $i\neq j$ and $\bigcup_{i=1}^{10}G_i=[60]$.

Next, split these ten base blocks into two groups of five,
\[
\mathcal{G}_1=\{G_1,\dots,G_5\},\qquad \mathcal{G}_2=\{G_6,\dots,G_{10}\},
\]
and define the induced bipartition of $[60]$ by
\begin{equation}\label{eq:halves}
U_1=\bigcup_{G\in\mathcal{G}_1}G=\{1,\dots,30\},\qquad
U_2=\bigcup_{G\in\mathcal{G}_2}G=\{31,\dots,60\}.
\end{equation}
Hence $U_1\cap U_2=\emptyset$, $U_1\cup U_2=[60]$, and $|U_1|=|U_2|=30$.

Partition each part $U_t$ into $15$ disjoint pairs:
\begin{align*}
P_i&=\{2i-1,2i\}\subset U_1\qquad (1\le i\le 15),\\
Q_i&=\{30+2i-1,30+2i\}\subset U_2\qquad (1\le i\le 15).
\end{align*}
Let
\[
\mathcal{P}_1=\{P_1,\dots,P_{15}\},\qquad
\mathcal{P}_2=\{Q_1,\dots,Q_{15}\}.
\]
Equivalently, each base block $G_i$ is internally partitioned into the three disjoint pairs
$\{6(i-1)+1,6(i-1)+2\}$, $\{6(i-1)+3,6(i-1)+4\}$, and $\{6(i-1)+5,6(i-1)+6\}$.

Define two families of $6$-subsets by taking unions of three pairs inside a part:
\begin{align}
\mathcal{B}_1&=\{P_i\cup P_j\cup P_k:\ 1\le i<j<k\le 15\},\label{eq:B1}\\
\mathcal{B}_2&=\{Q_i\cup Q_j\cup Q_k:\ 1\le i<j<k\le 15\}.\label{eq:B2}
\end{align}
Each block in $\mathcal{B}_1$ and $\mathcal{B}_2$ has size $6$, and
\begin{equation}\label{eq:size}
|\mathcal{B}_1|=|\mathcal{B}_2|=\binom{15}{3}=455.
\end{equation}
Finally, set
\begin{equation}\label{eq:B}
\mathcal{B}=\mathcal{B}_1\cup \mathcal{B}_2.
\end{equation}
Then $|\mathcal{B}|=455+455=910$.

\section{Main result}

\begin{theorem}\label{thm:main}
Let $\mathcal{B}$ be the family of $910$ blocks defined in \eqref{eq:B}.
For every $S\subset[60]$ with $|S|=6$, there exists $B\in\mathcal{B}$ such that $|S\cap B|\ge 3$.
Equivalently, $\mathcal{B}$ has covering radius $3$ in $J(60,6)$ with respect to $d_J$ in \eqref{eq:johnson-distance}.
\end{theorem}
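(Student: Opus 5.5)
Since $|S|=6$ and $U_1,U_2$ partition $[60]$, pigeonhole gives a part $U_t$ with $|S\cap U_t|\ge 3$; say $t=1$ by symmetry, the case $t=2$ being identical with $Q_i$ and $\mathcal{B}_2$ in place of $P_i$ and $\mathcal{B}_1$. Fix three distinct elements $x_1,x_2,x_3\in S\cap U_1$. Each $x_\ell$ lies in exactly one pair $P_{i_\ell}$ of $\mathcal{P}_1$, because $\mathcal{P}_1$ partitions $U_1$.

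The pairs $P_{i_1},P_{i_2},P_{i_3}$ need not be distinct; two of the $x_\ell$ may share a pair. So the set $I=\{i_1,i_2,i_3\}$ of indices has between $1$ and $3$ elements. We enlarge $I$ arbitrarily to a $3$-element subset $\{i<j<k\}\subseteq\{1,\dots,15\}$, which is possible since $15\ge 3$. Then we take $B=P_i\cup P_j\cup P_k\in\mathcal{B}_1$.

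By construction $x_1,x_2,x_3\in B$, so $|S\cap B|\ge 3$. This proves \eqref{eq:triple}. The equivalence with covering radius $3$ is immediate from \eqref{eq:johnson-distance}.

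There is no serious obstacle. The only point needing care is the possible coincidence of pairs, which is handled by padding $I$ to three indices. It also helps to note that a collision only makes things easier: if $x_1,x_2\in P_i$, then $P_i$ alone already contributes two elements of $S$ to $B$.
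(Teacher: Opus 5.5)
Your proof is correct and takes essentially the same route as the paper: pigeonhole on the halves $U_1,U_2$, then take the pairs containing three chosen points of $S\cap U_t$ and pad to three distinct pairs to get a block of $\mathcal{B}_t$. Your padding step covers every possible size of the index set, so it does not matter that you allowed size $1$ (which cannot actually occur, since three distinct points never lie in one pair), whereas the paper splits into the cases of $2$ and $3$ pairs.
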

\begin{proof}
Let $S\subset[60]$ be any $6$-subset. Using the bipartition \eqref{eq:halves}, set
\[
S_t=S\cap U_t\qquad (t\in\{1,2\}).
\]
Since $U_1\cup U_2=[60]$ is a disjoint union, we have $|S_1|+|S_2|=6$, hence
$\max\{|S_1|,|S_2|\}\ge 3$. Fix $t\in\{1,2\}$ such that $|S_t|\ge 3$.

Choose distinct elements $a,b,c\in S_t$. Because $\mathcal{P}_t$ is a partition of $U_t$ into disjoint pairs,
each of $a,b,c$ lies in a unique pair of $\mathcal{P}_t$; write
\[
\pi_t(x)\in\mathcal{P}_t\quad\text{for the unique pair containing }x\in U_t.
\]
Then the set of pairs $\{\pi_t(a),\pi_t(b),\pi_t(c)\}$ has cardinality $2$ or $3$.

If $|\{\pi_t(a),\pi_t(b),\pi_t(c)\}|=3$, define
\[
B=\pi_t(a)\cup \pi_t(b)\cup \pi_t(c)\in\mathcal{B}_t\subset\mathcal{B}.
\]
If $|\{\pi_t(a),\pi_t(b),\pi_t(c)\}|=2$, then two of $a,b,c$ lie in the same pair.
Let $D=\{\pi_t(a),\pi_t(b),\pi_t(c)\}$; in this case $|D|=2$.
Since $|\mathcal{P}_t|=15$, at least $15-2=13$ choices remain for a third pair $R\in\mathcal{P}_t\setminus D$.
Set
\[
B=\pi_t(a)\cup \pi_t(b)\cup R\in\mathcal{B}_t\subset\mathcal{B}.
\]
In both cases, $a,b,c\in B$, so $|S\cap B|\ge 3$.

Finally, by \eqref{eq:johnson-distance} we have $d_J(S,B)=6-|S\cap B|$, so $|S\cap B|\ge 3$
is equivalent to $d_J(S,B)\le 3$. Hence $\mathcal{B}$ has covering radius $3$ in $J(60,6)$.
\end{proof}

\subsection*{Worked example}
Let
\[
S=\{1,2,7,9,31,45\}.
\]
Then $|S\cap U_1|=4$, so we work inside $U_1$.
We have $1,2\in P_1=\{1,2\}$, $7\in P_4=\{7,8\}$, and $9\in P_5=\{9,10\}$.
Choosing the three indices $\{1,4,5\}$ gives the block
\[
B=P_1\cup P_4\cup P_5=\{1,2,7,8,9,10\}\in\mathcal{B}_1,
\]
and indeed $|S\cap B|=4\ge 3$.

\section{Lower bounds on the number of blocks}\label{sec:lower}

Let $M$ denote the minimum possible size of a family $\mathcal{F}\subset\binom{[60]}{6}$
satisfying the triple-intersection requirement
\[
\forall S\in\binom{[60]}{6}\ \exists B\in\mathcal{F}\ \text{such that}\ |S\cap B|\ge 3.
\]
Equivalently, $\mathcal{F}$ is a covering (dominating set) of the Johnson graph $J(60,6)$
with covering radius $3$ in the Johnson metric $d_J(S,B)=6-|S\cap B|$.

\subsection*{Sphere-covering (volume) bound}

Fix a block $B\in\binom{[60]}{6}$ and define its radius-$3$ neighborhood
\[
\mathcal{N}_3(B)=\{S\in\binom{[60]}{6}:\ d_J(S,B)\le 3\}
              =\{S\in\binom{[60]}{6}:\ |S\cap B|\ge 3\}.
\]
The cardinality $|\mathcal{N}_3(B)|$ depends only on the parameters, not on $B$.
Indeed, for $i=|S\cap B|$ we choose $i$ elements from $B$ and $6-i$ from the complement
$[60]\setminus B$ (which has size $54$). Hence
\begin{equation}\label{eq:N}
|\mathcal{N}_3(B)|
=\sum_{i=3}^{6}\binom{6}{i}\binom{54}{6-i}
=517{,}870.
\end{equation}

Now let $\mathcal{F}$ be any family with $|\mathcal{F}|=M$ satisfying the requirement.
Then the neighborhoods $\{\mathcal{N}_3(B):B\in\mathcal{F}\}$ cover all vertices
$\binom{[60]}{6}$, so by a union bound,
\[
\binom{60}{6}
=\left|\binom{[60]}{6}\right|
\le \sum_{B\in\mathcal{F}}|\mathcal{N}_3(B)|
= M\cdot 517{,}870.
\]
Therefore,
\begin{equation}\label{eq:lower}
M\ \ge\ \left\lceil\frac{\binom{60}{6}}{517{,}870}\right\rceil
=\left\lceil 96.67\ldots\right\rceil
=97.
\end{equation}

\begin{remark}[What the bound does \emph{and does not} say]
Inequality \eqref{eq:lower} is a \emph{necessary} condition:
with fewer than $97$ blocks, no construction can guarantee a triple intersection
with every $6$-subset.
However, \eqref{eq:lower} does \emph{not} imply that $97$ blocks are sufficient,
because the neighborhoods $\mathcal{N}_3(B)$ typically overlap heavily.
Thus the exact optimum $M$ may be strictly larger than $97$.
\end{remark}

Combining \eqref{eq:lower} with Theorem~\ref{thm:main} yields the current gap
\[
97 \le M \le 910.
\]
Determining $M$ exactly, or substantially improving either bound, is left open.

\section{Remarks and a generalization}

\begin{remark}[Why the two-halves split works]
The proof of Theorem~\ref{thm:main} uses only the fixed bipartition $[60]=U_1\sqcup U_2$:
every $6$-set must place at least three points in one half, and the internal pairing of that half
then forces a block containing those three points.
By contrast, if one partitions $[60]$ into more than two groups, a $6$-set can distribute itself
as $(2,2,2)$ across three groups, so no single group contains a triple; in that setting,
arguments that recombine pairs \emph{within one group} need not force a triple intersection.
This observation is purely explanatory and not needed in the proof above.
\end{remark}

\begin{proposition}[Generalization to $\lbrack 2m\rbrack$, $m$ even]\label{prop:general}

Let $m$ be even and set $n=2m$.
Split $[n]$ into halves $U_1$ and $U_2$ of size $m$ and partition each half into $m/2$ disjoint pairs.
Let $\mathcal{B}$ be the union of the two families formed by taking unions of three pairs inside one half.
Then $|\mathcal{B}|=2\binom{m/2}{3}$ and $\mathcal{B}$ satisfies \eqref{eq:triple} for $k=6$ on $[n]$.
\end{proposition}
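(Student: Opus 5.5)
The plan is to reuse the proof of Theorem~\ref{thm:main} essentially verbatim, with $30$ replaced by $m$ and $15$ replaced by $m/2$. Before that, one counting point and one hidden hypothesis need addressing.

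\emph{Counting.} The family built inside one half is indexed by $3$-subsets of its $m/2$ pairs. Distinct index triples give distinct $6$-sets, because the pairs are disjoint. The two halves are disjoint, so no block is counted twice. This gives $|\mathcal{B}|=2\binom{m/2}{3}$.

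\emph{Hypothesis.} I expect the main obstacle to be a degenerate case the statement does not exclude. For $\mathcal{B}$ to be nonempty, and for the ``third pair'' step to succeed, we need $m/2\ge 3$, i.e.\ $m\ge 6$. For $m=2$ the ground set $[4]$ has no $6$-subsets, so the claim holds vacuously. For $m=4$, however, $[8]$ has $6$-subsets while $\binom{2}{3}=0$, so $\mathcal{B}=\emptyset$ and \eqref{eq:triple} fails. I would therefore state explicitly that $m\ge 6$ is assumed (or $m=2$, vacuously) and prove the claim under that assumption.

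\emph{Covering argument.} Take any $S\in\binom{[n]}{6}$ and set $S_t=S\cap U_t$. Since $U_1\sqcup U_2=[n]$, we have $|S_1|+|S_2|=6$, so some $|S_t|\ge 3$. Pick distinct $a,b,c\in S_t$ and let $\pi_t$ send each point of $U_t$ to its pair. The pairs are disjoint and each has size $2$, so three distinct points occupy either $3$ distinct pairs or exactly $2$ pairs.
\begin{itemize}
\item If they occupy $3$ distinct pairs, the union of those three pairs is a block of $\mathcal{B}_t$.
\item If they occupy exactly $2$ pairs, we need one more pair from the remaining $m/2-2\ge 1$ pairs. This is exactly where $m\ge 6$ is used. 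The union of the two occupied pairs with this extra pair is a block of $\mathcal{B}_t$.
\end{itemize}
In both cases the block contains $a,b,c$, so $|S\cap B|\ge 3$.

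Finally, the evenness of $m$ is used only so that each half splits into pairs. For $m=30$ the statement recovers Theorem~\ref{thm:main}.
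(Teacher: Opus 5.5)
Your proof is correct and follows the same route as the paper's: pigeonhole on the two halves, then take the union of the occupied pairs, adding one extra pair if only two are occupied. You also justify the count $|\mathcal{B}|=2\binom{m/2}{3}$ more carefully than the paper does, and your flag on the hidden hypothesis is right: the paper's argument silently needs $m/2\ge 3$ for the extra pair to exist, and for $m=4$ the family is empty while $[8]$ has $6$-subsets, so the statement as written is false there and should assume $m\ge 6$ (with $m=2$ holding vacuously).
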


\begin{proof}
Identical to the proof of Theorem~\ref{thm:main}: any $6$-subset has at least three elements in one half,
and those three elements lie in at most three designated pairs in that half; a union of those pairs
(and, if needed, one extra pair) yields a block meeting the $6$-subset in at least three points.
\end{proof}
\section{Integer solutions of $x_1+x_2+x_3=6$}
\label{sec:solutions-x123}

Let $x_1,x_2,x_3\in\mathbb{Z}_{\ge 0}$ (zeros allowed). The number of nonnegative integer
solutions to
\[
x_1+x_2+x_3=6
\]
 It represents the number of ways to distribute the 6 elements of 
S among the three groups of blocks, is given by the stars-and-bars formula:
\[
\binom{6+3-1}{3-1}=\binom{8}{2}=28.
\]

For completeness, we list all $28$ solutions in increasing order of $x_1$:

\begin{itemize}
  \item $x_1=0$: $(0,0,6),(0,1,5),(0,2,4),(0,3,3),(0,4,2),(0,5,1),(0,6,0)$.
  \item $x_1=1$: $(1,0,5),(1,1,4),(1,2,3),(1,3,2),(1,4,1),(1,5,0)$.
  \item $x_1=2$: $(2,0,4),(2,1,3),\textbf{(2,2,2)},(2,3,1),(2,4,0)$.
  \item $x_1=3$: $(3,0,3),(3,1,2),(3,2,1),(3,3,0)$.
  \item $x_1=4$: $(4,0,2),(4,1,1),(4,2,0)$.
  \item $x_1=5$: $(5,0,1),(5,1,0)$.
  \item $x_1=6$: $(6,0,0)$.
\end{itemize}

\paragraph{Why a $(3-blocks,3-blocks,4-blocks)$ split of the ten base blocks is not sufficient.}
One might hope that partitioning the ten base blocks into three groups,
say $\mathcal{G}_1$ with $3$ blocks, $\mathcal{G}_2$ with $3$ blocks, and $\mathcal{G}_3$ with $4$ blocks,
could lead to an even more economical construction, by recombining the three disjoint pairs
\emph{within each group} to generate candidate $6$-blocks.
However, this strategy fails in general because of the balanced distribution pattern $(2,2,2)$:
there exist $6$-subsets $S\subset[60]$ such that $|S\cap U_1|=|S\cap U_2|=|S\cap U_3|=2$,
where $U_t=\bigcup_{B\in\mathcal{G}_t}B$ is the union of the elements covered by group $\mathcal{G}_t$.
For such an $S$, any recombined block built solely from pairs inside a single group $\mathcal{G}_t$
is contained in $U_t$ and therefore intersects $S$ in at most $2$ points, i.e., $|S\cap B|\le 2$.
Consequently, groupwise pair-recombination under a $(3-blocks,3-blocks,4-blocks)$ partition does \emph{not} guarantee
the appearance of a triple intersection after recombination.

After recombining the pairs within each group, we would obtain
\[
\binom{9}{3}+\binom{9}{3}+\binom{12}{3}
\]
blocks, yielding a construction with a total of $388$ blocks.

\section*{Visualization of 10 blocks divided into 3 groups, configuration (2,2,2)}

\begin{tikzpicture}[scale=1, every node/.style={scale=1}]

\draw[thick] (0,0) -- (2,0); 
\filldraw[black] (1,0) circle (2pt); 
\draw[thick] (0,-0.8) -- (2,-0.8); 
\draw[thick] (0,-1.6) -- (2,-1.6); 
\filldraw[black] (1,-1.6) circle (2pt); 

\draw[thick] (3,0) -- (5,0); 
\filldraw[black] (4,0) circle (2pt);
\draw[thick] (3,-0.8) -- (5,-0.8); 
\draw[thick] (3,-1.6) -- (5,-1.6); 
\filldraw[black] (4,-1.6) circle (2pt);

\draw[thick] (6,0) -- (8,0); 
\filldraw[black] (7,0) circle (2pt);
\draw[thick] (6,-0.6) -- (8,-0.6); 
\draw[thick] (6,-1.2) -- (8,-1.2); 
\draw[thick] (6,-1.8) -- (8,-1.8); 
\filldraw[black] (7,-1.8) circle (2pt);

\end{tikzpicture}

Assuming that this configuration ocurred, we can take one block at a time from the second group and combine it with pairs taken from the 4-block group.

We do this twice to ensure that there will be 3 numbers (represented by dots) in the first 3-block group, which now becomes a group of 4 blocks.

When we recombine the 12 pairs into triples, we get 
$2 \cdot \binom{12}{3}$ , 440 blocks. Adding these to the previous 388 blocks, we have a total of 828 blocks.

Thus, we can ensure that this construction guarantees:

\begin{equation}\label{eq:triple}
\forall S\in \binom{[60]}{6}\ \exists B\in\mathcal{B}\ \text{such that}\ |S\cap B|\ge 3.
\end{equation}

\section{A $(5-blocks,5-blocks)$ partition guarantees a triple after within-group recombination}
\label{sec:55-guarantee}

Recall the bipartition $[60]=U_1\sqcup U_2$ from Section~\ref{sec:construction}, together with the induced
pair partitions $\mathcal{P}_1$ and $\mathcal{P}_2$ of $U_1$ and $U_2$.
By construction, each $U_t$ is partitioned into $15$ disjoint pairs $\mathcal{P}_t$ ($t\in\{1,2\}$).
Consider the family of recombined $6$-blocks obtained by taking unions of three pairs \emph{within the same group}:
\[
\mathcal{B}_t=\{P\cup P'\cup P'':\ P,P',P''\in\mathcal{P}_t,\ \text{pairwise distinct}\},
\qquad t\in\{1,2\}.
\]

\subsection{Stars-and-bars viewpoint for the $(5,5)$ split}
Let $S\subset[60]$ be any $6$-subset. Set
\[
x_1=|S\cap U_1|,\qquad x_2=|S\cap U_2|.
\]
Since $U_1\cup U_2=[60]$ and the union is disjoint, we have
\[
x_1+x_2=6,\qquad x_1,x_2\in\mathbb{Z}_{\ge 0}.
\]
The solutions of $x_1+x_2=6$ (which represent the number of ways to distribute the $6$ elements of $S$
between the two groups) are:
\[
(0,6),(1,5),(2,4),(3,3),(4,2),(5,1),(6,0).
\]
In every case, at least one of $x_1$ or $x_2$ is at least $3$; equivalently, $S$ contains a triple
fully contained in one of the parts $U_1$ or $U_2$.

\subsection{From a triple to a recombined block}
\begin{lemma}
\label{lem:triple-to-3pairs}
Let $t\in\{1,2\}$ and let $T\subset U_t$ be any $3$-subset.
There exists a block $S'\in\mathcal{B}_t$ such that $T\subset S'$.
\end{lemma}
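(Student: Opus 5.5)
The plan is to mirror the pair-counting step from the proof of Theorem~\ref{thm:main}, now phrased for an arbitrary $3$-subset $T=\{a,b,c\}\subset U_t$. We use the map $\pi_t$ from that proof, where $\pi_t(x)\in\mathcal{P}_t$ is the unique pair containing $x\in U_t$; it is well defined because $\mathcal{P}_t$ is a partition of $U_t$ into $15$ disjoint pairs. Let $D=\{\pi_t(a),\pi_t(b),\pi_t(c)\}$. The first step is to bound $|D|$. Since $D$ is an image of three elements, $|D|\le 3$. Since each pair has only two elements, no pair can contain all three of $a,b,c$, so $|D|\ge 2$. Hence $|D|\in\{2,3\}$.

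Then we split into the two cases. If $|D|=3$, set $S'=\pi_t(a)\cup\pi_t(b)\cup\pi_t(c)$. This is a union of three pairwise distinct pairs of $\mathcal{P}_t$, so $S'\in\mathcal{B}_t$, and clearly $T\subset S'$. If $|D|=2$, write $D=\{P,P'\}$. Because $|\mathcal{P}_t|=15>2$, we can pick any $P''\in\mathcal{P}_t\setminus D$. Set $S'=P\cup P'\cup P''$. Its three pairs are pairwise distinct, so $S'\in\mathcal{B}_t$. Every element of $T$ lies in $P\cup P'$, so $T\subset S'$.

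I do not expect any genuine obstacle here. The only point needing care is the lower bound $|D|\ge 2$, which rests on pairs having exactly two elements, together with the existence of a spare pair in the second case. That existence only uses $|\mathcal{P}_t|\ge 3$, which holds with room to spare. Finally, the disjointness of the pairs guarantees that $|S'|=6$, so $S'$ really is a block of the required form.
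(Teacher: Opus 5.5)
Your proposal is correct and follows essentially the same argument as the paper: use $\pi_t$ to see that $a,b,c$ occupy either two or three pairs of $\mathcal{P}_t$, take their union in the three-pair case, and add any spare pair from $\mathcal{P}_t$ in the two-pair case. You also justify explicitly two points the paper leaves implicit: $|D|\ge 2$ (since a pair has only two elements), and $|S'|=6$ (since the pairs are disjoint).
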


\begin{proof}
Each element of $U_t$ lies in a unique pair of the partition $\mathcal{P}_t$.
Let $\pi_t(a)$ denote the unique pair in $\mathcal{P}_t$ containing $a\in U_t$.
For $T=\{a,b,c\}$, the set of pairs $\{\pi_t(a),\pi_t(b),\pi_t(c)\}$ has size $2$ or $3$.
If it has size $3$, then $S'=\pi_t(a)\cup \pi_t(b)\cup \pi_t(c)\in\mathcal{B}_t$ and $T\subset S'$.
If it has size $2$, then two of $a,b,c$ lie in the same pair; choose any third pair $P\in\mathcal{P}_t$
distinct from those two pairs and set $S'$ to be the union of these three pairs. In all cases, $T\subset S'$.
\end{proof}

\begin{proposition}[Triple intersection under a $(5,5)$ split]
\label{prop:55-guarantee}
For every $6$-subset $S\subset[60]$ there exists a recombined block $S'\in\mathcal{B}_1\cup\mathcal{B}_2$
such that
\[
|S\cap S'|\ge 3.
\]
\end{proposition}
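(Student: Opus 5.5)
The plan is to combine the stars-and-bars observation for the $(5,5)$ split with Lemma~\ref{lem:triple-to-3pairs}. I will proceed in three short steps, mirroring the proof of Theorem~\ref{thm:main} but now routing the combinatorial core through the lemma.

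\emph{Step 1 (a heavy half exists).} Fix an arbitrary $6$-subset $S\subset[60]$ and set $x_t=|S\cap U_t|$ for $t\in\{1,2\}$. Because $[60]=U_1\sqcup U_2$ is a disjoint union, $x_1+x_2=6$. Suppose both $x_1\le 2$ and $x_2\le 2$; then $x_1+x_2\le 4<6$, a contradiction. Hence some $t\in\{1,2\}$ satisfies $x_t\ge 3$. Equivalently, one can read this off the explicit list of the seven solutions $(0,6),\dots,(6,0)$ given above: each has a coordinate at least $3$.

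\emph{Step 2 (extract a triple).} For the $t$ found in Step 1, choose any three distinct elements $a,b,c\in S\cap U_t$, which is possible since $|S\cap U_t|\ge 3$. Put $T=\{a,b,c\}\subset U_t$.

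\emph{Step 3 (apply the lemma).} Lemma~\ref{lem:triple-to-3pairs} applied to $T$ yields a block $S'\in\mathcal{B}_t\subset\mathcal{B}_1\cup\mathcal{B}_2$ with $T\subset S'$. Since also $T\subset S$, we have $T\subset S\cap S'$, and therefore $|S\cap S'|\ge|T|=3$. This is exactly the claim.

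There is no real obstacle. The only delicate point, the case where two of $a,b,c$ share a pair so that a third pair must be chosen freely, is already handled inside Lemma~\ref{lem:triple-to-3pairs}. That case needs $|\mathcal{P}_t|\ge 3$, which holds because $|\mathcal{P}_t|=15$. The step I would check most carefully is Step 1, namely that disjointness of $U_1$ and $U_2$ really gives $x_1+x_2=6$ and not merely $x_1+x_2\ge 6$. This is immediate from the partition.
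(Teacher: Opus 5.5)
Your proof is correct and follows the paper's argument exactly. You use pigeonhole on $x_1+x_2=6$ to find a half with $x_t\ge 3$, extract a $3$-subset $T\subset S\cap U_t$, and apply Lemma~\ref{lem:triple-to-3pairs} to get $S'\in\mathcal{B}_t$ with $T\subset S\cap S'$. (A small aside: the pigeonhole step only needs $x_1+x_2\ge 6$, which comes from $U_1\cup U_2=[60]$, not from disjointness.)
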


\begin{proof}
Let $x_1=|S\cap U_1|$ and $x_2=|S\cap U_2|$. Since $x_1+x_2=6$, at least one of them is $\ge 3$.
Assume $x_t\ge 3$ for some $t\in\{1,2\}$. Then $S\cap U_t$ contains a $3$-subset $T$.
By Lemma~\ref{lem:triple-to-3pairs}, there exists $S'\in\mathcal{B}_t$ with $T\subset S'$.
Hence $|S\cap S'|\ge |T|=3$.
\end{proof}

\section{Conclusion}

We gave an explicit family of $910$ blocks of size $6$ guaranteeing a triple intersection with every
$6$-subset of $[60]$.
The construction is elementary and deterministic, and its proof is a short pigeonhole argument.

 We also provided another construction \ref{sec:solutions-x123} that ensures the validity of Equation \ref{eq:triple}. Construction , although it does not guarantee Equation \ref{eq:triple}, but has a probability of success of $\frac{27}{28}.$

The reader may choose any set \( S \subseteq \{1, 2, \dots, 60\} \) containing 6 distinct elements.  
From this set, it is possible to generate all \( \binom{6}{3} = 20 \) combinations of 3 elements (trios).

These 20 trios can then be searched across the 910 blocks generated by the algorithm.  
Due to the construction method of these blocks—which recombines disjoint base blocks through all possible trio pairings within grouped partitions—it is guaranteed that at least some of the trios will appear among the 910 blocks.

This property makes the structure particularly useful for exploring the distribution of number combinations and for performing simulations or statistical analyses and lottery games.

\newpage
\begin{appendices}

\section*{Appendix A:Python Program for Block Generation and Trio Search}

The following Python code was used to generate 910 blocks based on a disjoint partition of the set $\{1, \dots, 60\}$ and search all 3-element combinations (trios) from a given set $S$ of 6 elements.

\begin{lstlisting}[language=Python, caption=Python Code to Generate 910 Blocks and Search Trios]
import random
from itertools import combinations

def split_disjoint_blocks(numbers, block_size=6):
    return [numbers[i:i + block_size] for i in range(0, len(numbers), block_size)]

def get_15_pairs(group):
    pairs = []
    for block in group:
        pairs.extend([tuple(block[i:i + 2]) for i in range(0, 6, 2)])
    return pairs

def generate_blocks_from_pairs(pairs):
    blocks = []
    for p1, p2, p3 in combinations(pairs, 3):
        block = list(p1 + p2 + p3)
        blocks.append(block)
    return blocks

def generate_final_blocks():
    numbers = list(range(1, 61))
    random.shuffle(numbers)
    blocks = split_disjoint_blocks(numbers)
    group_A = blocks[:5]
    group_B = blocks[5:]
    pairs_A = get_15_pairs(group_A)
    pairs_B = get_15_pairs(group_B)
    blocks_A = generate_blocks_from_pairs(pairs_A)
    blocks_B = generate_blocks_from_pairs(pairs_B)
    return blocks_A + blocks_B
\end{lstlisting}

\section*{Appendix B: Example of Application in Lottery Games}

Consider a lottery game where 6 numbers are drawn at random from the set \( \{1, 2, ..., 60\} \).

Using the 910 blocks generated by this construction, we can guarantee that, for any combination of 6 drawn numbers, at least one of the 20 possible 3-number combinations (trios) will appear in one of the 910 tickets.

In other words, by playing all 910 tickets, the player is guaranteed to have at least one trio matching the drawn numbers, and therefore only depends on luck to correctly guess the remaining 3 numbers to complete a full match.

Furthermore, if a player chooses to play only 388 tickets, the probability of obtaining at least one correct trio **in the same ticket as the draw** is \( \frac{27}{28} \approx 96.42\% \). This high probability demonstrates the strong coverage provided by the block design, even with fewer tickets.
\end{appendices}

\end{document}